\documentclass[mathpazo]{cicp}


\usepackage{epsfig,amsfonts,amsmath,amssymb}
\usepackage[usenames,dvipsnames]{color}
\usepackage[IT]{subfigure}
\usepackage{amsthm}
\usepackage{mathrsfs}

\usepackage[T1]{fontenc}
\usepackage[utf8]{inputenc}
\usepackage[english]{babel}

\newtheorem{thm}{Theorem}
\newtheorem{lem}[thm]{Lemma}
\newtheorem{cor}[thm]{Corollary}

\theoremstyle{definition}

\newtheorem{as}{Assumption}

\theoremstyle{remark}
\newtheorem{rem}{Remark}

\newcommand{\D}{\mathcal{D}}
\newcommand{\HH}{\mathcal{H}}
\newcommand{\RR}{\mathcal{R}}
\newcommand{\C}{\mathscr{C}}

\newcommand{\e}{\mathrm{e}}

\newcommand{\inv}{^{-1}}
\newcommand{\dede}[2]{\frac{\partial #1}{\partial #2}}
\newcommand{\dedex}{\dede{}{x}}
\newcommand{\dedey}{\dede{}{y}}
\newcommand{\dd}[2]{\frac{\mathrm{d} #1}{\mathrm{d} #2}}

\newcommand{\bbm}{\begin{bmatrix}}
\newcommand{\ebm}{\end{bmatrix}}
\newcommand{\R}{\mathbb{R}}

\def\norm#1{\lVert#1\rVert}
\def\normL2#1{\norm{#1}}
\def\normH1#1{\norm{#1}_{H^1(\Omega)}}


\begin{document}
\title{A full space-time convergence order analysis of operator splittings for linear dissipative evolution equations}

\author[E.~Hansen, E.~Henningsson]{Eskil Hansen and Erik Henningsson\corrauth}
\address{Centre for Mathematical Sciences, Lund University, P.O.\ Box 118, SE-221 00 Lund, Sweden}
\emails{{\tt eskil@maths.lth.se} (E.~Hansen),
{\tt erikh@maths.lth.se} (E.~Henningsson)}

\begin{abstract}
The Douglas--Rachford and Peaceman--Rachford splitting methods are common choices for temporal discretizations of evolution equations. In this paper we combine these methods with spatial discretizations fulfilling some easily verifiable criteria. In the setting of linear dissipative evolution equations we prove optimal convergence orders, simultaneously in time and space. We apply our abstract results to dimension splitting of a 2D diffusion problem, where a finite element method is used for spatial discretization. To conclude, the convergence results are illustrated with numerical experiments.
\end{abstract}

\ams{65J08, 65M12, 65M60}
\keywords{Douglas/Peaceman--Rachford schemes, full space-time discretization, dimension splitting, convergence order, evolution equations, finite element methods.}

\maketitle

\section{Introduction}
We consider the linear evolution equation
\begin{equation}
\dot u = Lu = (A+B)u, \quad u(0) = \eta,
\label{eq:evolution}
\end{equation}
where $L$ is an unbounded, dissipative operator. Such equations are commonly encountered in the natural sciences, e.g.\ when modeling advection-diffusion processes. Splitting methods are widely used for temporal discretizations of evolution equations. The competitiveness of these methods is attributed to their separation of the flows generated by $A$ and $B$. In many applications these separated flows can be more efficiently evaluated than the flow related to $L$; a prominent example being that of dimension splitting. We refer to \cite{Hairer,Hundsdorfer,McLachlan} for general surveys.

In the present paper we consider the combined effect of temporal and spatial discretization when the former is given by either the Douglas--Rachford scheme
\begin{equation}
S = (I-kB)\inv(I-kA)\inv(I+k^2AB),
\label{eq:DR}
\end{equation}
or the Peaceman--Rachford scheme
\begin{equation}
S = (I-\frac k2B)\inv(I+\frac k2A)(I-\frac k2A)\inv(I+\frac k2B).
\label{eq:PR}
\end{equation}   
Here, $S$ denotes the operator that takes a single time step of size $k$. Thus $S^n\eta$ constitutes a temporal splitting approximation at time $t = nk > 0$ of the solution $u(t) = \e^{tL}\eta$ of Eq.~\eqref{eq:evolution}. The first order Douglas--Rachford scheme can be constructed as a modification of the simple Lie splitting resulting in an advantageous error structure. An exposition is given in~\cite{Hansen2014}. The Peaceman--Rachford scheme was introduced in \cite{Peaceman} as a dimension splitting of the heat equation. A temporal convergence order analysis of the scheme for linear evolution equations is given in \cite{Hansen2008}, which also features an application to dimension splitting. Convergence orders in time are proven in \cite{Descombes2003,Hansen2013} for nonlinear operators $B$ under various assumptions on the nonlinearity. See also \cite{Schatzman} for further stability considerations.

In the general setting the operators $A$, $B$, and $L$ are infinite dimensional. Therefore, to define an algorithm that can be implemented, any numerical method must replace these operators, that is, a spatial discretization is needed. In our abstract analysis, we consider any spatial discretization fulfilling some assumptions ensuring convergence for the stationary problem. When both a temporal and a spatial discretization has been employed to Eq.~\eqref{eq:evolution} we refer to it as being \emph{fully discretized}. Under similar assumptions to ours, convergence orders are proven in \cite{Crouzeix,Thomee} for full discretizations where implicit Euler or Crank--Nicholson is used as temporal discretization.

Our abstract analysis is applied to dimension splitting combined with a finite element method. As is usually done in practice, we consider spatial discretizations where the finite element matrices are constructed with the help of numerical quadrature schemes. We will refer to these discretization methods as \emph{quadrature finite element methods}. Convergence order analyses for quadrature finite element methods are carried out for linear elliptic PDEs in \cite{Ciarlet,Ciarlet1972,Strang,Thomee} and when they are used as spatial semi-discretizations for a linear parabolic problem in \cite{Raviart}. Full discretizations of a nonlinear parabolic PDE, where the spatial discretization is given by quadrature finite elements, are considered in \cite{Nie}. There, convergence orders are derived when explicit Euler, implicit Euler or a modified Crank--Nicholson method is used as temporal discretization. 

Earlier results about the combined effects of splitting methods and spatial discretizations include the recent paper \cite{Batkai2012}. There,  convergence without orders is proven for full discretizations when exponential splittings are used for temporal discretization of the abstract evolution equation \eqref{eq:evolution}. Full space-time convergence studies for semi-implicit methods applied to various semilinear evolution equations can be found in \cite{Akrivis,Larsson1992,Thomee}. A partial error analysis for the Peaceman--Rachford scheme with orders in time is carried out in \cite{Hundsdorfer1989}. 

However, to our knowledge, there is no abstract convergence analysis, providing optimal orders both in time and space, for full discretizations of Eq.~\eqref{eq:evolution} which only assumes that $L = A+B$ is dissipative and where splitting methods are used as temporal discretization. The aim of this paper is therefore to analyze convergence orders for the splitting schemes \eqref{eq:DR} and \eqref{eq:PR} combined with converging spatial discretizations. We strive to assume regularity only on the initial data $\eta$ in order to make the assumptions easy to verify. 

Our proof will follow in the spirit of \cite{Crouzeix,Thomee}. To this end we analyze the spatial semi-discretization of Eq.~\eqref{eq:evolution} in Section~\ref{sec:spatial_discretization} and then we expand the analysis to full discretizations in Section~\ref{sec:full_discretization}. The analysis of the temporal error is performed in the finite dimensional subspace defined by the spatial semi-discretization. The advantage of this approach is that we need no assumptions on the operators $A$ and $B$ and their relation to $L$. In Section~\ref{sec:dimension_splitting} we present how dimension splitting combined with quadrature finite elements can be fitted into our abstract framework. Our theoretical results are exemplified in Section~\ref{sec:numerical_experiments} with some numerical experiments.

\section{Spatial discretization} 
\label{sec:spatial_discretization}
Let $L: \D(L) \subset \HH \rightarrow \HH$ be a linear, unbounded operator on a real Hilbert space $\HH$. Denote the inner product on $\HH$ by $(\cdot,\cdot)$ and the induced norm by $\norm{\cdot}$. The latter notation is also used for the related operator norm. Throughout the paper $C$ is a generic constant taking different values at different occurrences.
A linear operator $E:\D(E)\subset\HH \rightarrow \HH$ is called maximal dissipative if
\[ (Ev,v) \leq 0, \text{ for all } v \in \D(E), \quad \text{and} \quad  \RR(I-kE) = \HH, \text{ for all } k > 0. \]
Recall that this implies that $E$ generates a strongly continuous semigroup of contractions $\{\e^{tE}\}_{t \geq 0}$ and that the resolvent $(I-kE)\inv$ is nonexpansive on $\HH$ for all $k > 0$, \cite[Theorems 1.3.1 and 1.4.3]{Pazy}. We consider operators exhibiting this property.

\begin{as} \label{as:continuous}
The operator $L:\D(L)\subset\HH \rightarrow \HH$ is maximal dissipative and (for the sake of simplicity) invertible.
\end{as}

As spatial discretization consider a family of finite dimensional subspaces of $\HH$, denoted by $\{\HH_h\}_{0 < h \leq h_{max}}$, which are of increasing dimension as $h$ tends to zero. Equip each of them with its own inner product $(\cdot,\cdot)_{h}$. On these spaces define the discrete operators $A_h:\HH_h\rightarrow \HH_h$, $B_h:\HH_h\rightarrow \HH_h$ and $L_h = A_h + B_h$. The ODE
\begin{equation}
\dot u_h = L_h u_h = (A_h + B_h)u_h, \quad u_h(0) = \eta_h,
\label{eq:discrete_evolution}
\end{equation}
where $\eta_h\in \HH_h$ is an approximation of $\eta$, is then the spatial semi-discretization of the evolution equation \eqref{eq:evolution}. We choose the spaces $\HH_h$, the inner products $(\cdot,\cdot)_{h}$, and the discrete operators such that Assumption~\ref{as:space} is fulfilled.

\pagebreak
\begin{as} \label{as:space}
For fixed $s > 0$ and $q = 0$ or $1$, assume the following: 
\begin{enumerate}
	\item \label{ass:norm_equiv} 
	The norms $\norm{\cdot}$ and $\norm{\cdot}_{h}$ are uniformly equivalent on $\HH_h$, that is
	\[ C_1 \norm{v_h} \leq \norm{v_h}_h \leq C_2 \norm{v_h}, \quad \text{ for all } v_h \in \HH_h, \]
	where the two constants $C_1$ and $C_2$ are independent of $h$.
	
	\item \label{ass:projection} 
	There is a mapping $P_h:\D(P_h)\subset \HH \rightarrow \HH_h$ such that $\D(L^q) \subset \D(P_h)$ and
	\[\norm{P_h v - v} \leq Ch^s \sum_{i=0}^q{\norm{L^i v}}, \quad \text{ for all } v\in \D(L^q),\]
	for a constant $C$ independent of $h$. 
	
	\item \label{ass:discrete_dissipativity}
	For all $h\in (0,h_{max}]$ the operators $A_h$ and $B_h$ are dissipative on $(\HH_h,(\cdot,\cdot)_{h})$.
	
	\item \label{ass:discrete_invetibility} 
	For the sake of simplicity assume that for all $h\in (0,h_{max}]$ the operator $L_h$ is invertible and its inverse is bounded uniformly in $h$.
	
	\item \label{ass:elliptic_error} 
	There is a constant $C$, independent of $h$, such that 
	\[ \norm{L^{-1}v - L_h^{-1} P_h v} \leq Ch^s \sum_{i=0}^q{\norm{L^i v}}, \quad \text{ for all } v \in \D(L^q). \]
\end{enumerate}
\end{as}
\begin{rem} \label{rem:L_h_dissipative}
The operator $L_h$ is dissipative as a direct consequence of Assumption~\ref{as:space}.\ref{ass:discrete_dissipativity}. All the discrete operators $A_h, B_h$ and $L_h$ are also maximal since they are dissipative on the finite dimensional space $\HH_h$.
\end{rem}

We aim to bound the error of the spatial semi-discretization, i.e.\ the difference between the solutions $u(t) = \e^{tL}\eta$ of Eq.~\eqref{eq:evolution} and $u_h(t) = \e^{tL_h}\eta_h$ of Eq.~\eqref{eq:discrete_evolution}. To this end we define the operator
\begin{equation}
Q_h = L_h\inv P_h L: \D(L^{q+1}) \rightarrow \HH_h.
\label{eq:Q_h}
\end{equation}
\begin{lem}\label{lem:space_error}
If Assumptions \ref{as:continuous} and \ref{as:space} are valid, $\eta \in \D(L^{q+2})$, and $\eta_h \in \HH_h$, then
\begin{equation*}
\norm{\e^{tL}\eta - \e^{tL_h}\eta_h} \leq C (\norm{\eta-\eta_h} + h^s \sum_{i=1}^{q+2}{\norm{L^i\eta}}),
\end{equation*}
where $C$ can be chosen uniformly on bounded time intervals and, in particular, independently of $h$ and $n$.
\end{lem}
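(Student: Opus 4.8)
The plan is to estimate the semi-discretization error by interpolating through the auxiliary state $Q_h\eta = L_h^{-1}P_h L\eta \in \HH_h$, splitting
\[
\norm{\e^{tL}\eta - \e^{tL_h}\eta_h} \leq \norm{\e^{tL}\eta - \e^{tL_h}Q_h\eta} + \norm{\e^{tL_h}Q_h\eta - \e^{tL_h}\eta_h}.
\]
The second term is the easy one: since $L_h$ is dissipative on $(\HH_h,(\cdot,\cdot)_h)$ by Remark~\ref{rem:L_h_dissipative}, the semigroup $\e^{tL_h}$ is a contraction in $\norm{\cdot}_h$, and by the uniform norm equivalence in Assumption~\ref{as:space}.\ref{ass:norm_equiv} it is bounded (uniformly in $h$ and $t$) in $\norm{\cdot}$. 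Hence that term is controlled by $C\norm{Q_h\eta - \eta_h}$, and writing $Q_h\eta - \eta_h = (Q_h\eta - \eta) + (\eta - \eta_h)$ it remains to bound $\norm{Q_h\eta - \eta} = \norm{L_h^{-1}P_h L\eta - L^{-1}L\eta}$; applying Assumption~\ref{as:space}.\ref{ass:elliptic_error} with $v = L\eta$ gives $\norm{Q_h\eta-\eta}\leq Ch^s\sum_{i=0}^q\norm{L^{i+1}\eta}\leq Ch^s\sum_{i=1}^{q+1}\norm{L^i\eta}$, which is of the required form (note this needs $\eta\in\D(L^{q+1})$, within the hypothesis $\eta\in\D(L^{q+2})$).

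The first term is the heart of the argument. I would write it via the standard semigroup identity: setting $w(t) = \e^{tL}\eta - \e^{tL_h}Q_h\eta$, one has $w(0) = \eta - Q_h\eta$ and, using that $\e^{tL}$ solves the continuous equation and $\e^{tL_h}$ the discrete one,
\[
\e^{tL}\eta - \e^{tL_h}Q_h\eta = \e^{tL_h}(\eta - Q_h\eta) + \int_0^t \e^{(t-\tau)L_h}\bigl(L_h Q_h - Q_h L\bigr)\e^{\tau L}\eta \,\dt\!\!\!\!
\]
wait—more carefully, the clean way is the telescoping identity $\e^{tL}\eta - \e^{tL_h}Q_h\eta = (\e^{tL}-\e^{tL_h}Q_h)\eta$, differentiating $\e^{(t-\tau)L_h}Q_h\e^{\tau L}\eta$ in $\tau$, which produces the "defect" operator $L_hQ_h - Q_hL$ acting on $\e^{\tau L}\eta$. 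By construction $Q_h = L_h^{-1}P_hL$, so $L_hQ_h = P_hL$ and the defect is $P_hL - Q_hL = (P_h - L_h^{-1}P_hL\,L^{-1})L = (P_h L^{-1} - L_h^{-1}P_h)L\cdot(\text{applied appropriately})$; the point is that the defect applied to $\e^{\tau L}\eta$ equals $(P_h - Q_hL^{-1}\cdot)$-type terms bounded, via Assumptions~\ref{as:space}.\ref{ass:projection} and~\ref{as:space}.\ref{ass:elliptic_error}, by $Ch^s\sum_{i}\norm{L^i\e^{\tau L}\eta}$. Using that $\e^{\tau L}$ is a contraction and commutes with powers of $L$, $\norm{L^i\e^{\tau L}\eta}\leq\norm{L^i\eta}$, so after integrating over $\tau\in[0,t]$ and using the uniform boundedness of $\e^{(t-\tau)L_h}$ in $\norm{\cdot}$, the whole first term is bounded by $C(1+t)h^s\sum_{i=1}^{q+2}\norm{L^i\eta}$, uniformly on bounded time intervals. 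The extra power (up to $L^{q+2}$) enters precisely because the defect estimate costs one derivative more than $w(0)$ does, and $w(0)=\eta-Q_h\eta$ already requires $L^{q+1}\eta$.

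The main obstacle is organizing the defect term $L_hQ_h - Q_hL$ and extracting the $h^s$ bound cleanly from Assumption~\ref{as:space}, since $Q_h$ mixes $P_h$, $L$, and $L_h^{-1}$; one must be careful that $\e^{\tau L}\eta$ lies in the right domain ($\D(L^{q+2})$ guarantees $L\e^{\tau L}\eta\in\D(L^{q+1})$, matching the domain $\D(L^{q+1})$ of $Q_h$) and that all constants are independent of $h$, $t$, $n$. The semigroup-commutation and contraction facts are exactly what Assumption~\ref{as:continuous} and Remark~\ref{rem:L_h_dissipative} provide, so no assumptions on $A$, $B$ individually are needed here.
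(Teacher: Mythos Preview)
Your approach is correct and coincides with the paper's in all essential ingredients: the same auxiliary operator $Q_h$, the same two key bounds $\norm{(I-Q_h)v}\leq Ch^s\sum_{i=1}^{q+1}\norm{L^iv}$ and $\norm{(P_h-Q_h)v}\leq Ch^s\sum_{i=0}^{q+1}\norm{L^iv}$, and the same defect $(Q_h-P_h)L$ arising from $L_hQ_h=P_hL$. The only stylistic difference is that the paper sets $\theta(t)=Q_h\e^{tL}\eta-\e^{tL_h}\eta_h$, derives $\dot\theta-L_h\theta=(Q_h-P_h)\e^{tL}L\eta$, and bounds $\theta$ by an energy estimate (testing with $\theta$ and using dissipativity of $L_h$), whereas you use the equivalent Duhamel representation and the contraction bound on $\e^{(t-\tau)L_h}$. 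One small point to clean up: differentiating $\e^{(t-\tau)L_h}Q_h\e^{\tau L}\eta$ in $\tau$ and integrating yields $Q_h\e^{tL}\eta-\e^{tL_h}Q_h\eta$, not $\e^{tL}\eta-\e^{tL_h}Q_h\eta$, so you must add the boundary term $(I-Q_h)\e^{tL}\eta$ explicitly; this is harmless (it is exactly the paper's $\rho(t)$ and is bounded by $Ch^s\sum_{i=1}^{q+1}\norm{L^i\eta}$), but your write-up skips over it.
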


\begin{proof}
We will repeatedly need the bounds 
\begin{align}
& \norm{(I-Q_h)v} = \norm{(L\inv - L_h\inv P_h) Lv} \leq Ch^s \sum_{i=1}^{q+1}{\norm{L^i v}}, \label{eq:I_Qh}\\
& \norm{(P_h-Q_h)v} \leq \norm{(P_h-I)v} + \norm{(I-Q_h)v} \leq Ch^s \sum_{i=0}^{q+1}{\norm{L^i v}}, \label{eq:Ph_Qh}
\end{align}
which follow from Assumptions \ref{as:space}.\ref{ass:projection} and \ref{as:space}.\ref{ass:elliptic_error} for $v \in \D(L^{q+1})$. Splitting the spatial error into two terms gives
\[ \e^{tL}\eta - \e^{tL_h}\eta_h = (I-Q_h)\e^{tL}\eta + (Q_h\e^{tL}\eta - \e^{tL_h}\eta_h) = \rho(t) + \theta(t). \]
The first term is bounded by Eq.~\eqref{eq:I_Qh}, i.e.
\begin{equation*}
\norm{\rho(t)} \leq Ch^s \sum_{i=1}^{q+1}{\norm{L^i\e^{tL}\eta}} \leq Ch^s \sum_{i=1}^{q+1}{\norm{L^i\eta}}. 
\end{equation*}
Since $L$ and $L_h$ generates strongly continuous semigroups and $\eta \in \D(L^{q+2})$ we get from \cite[Theorem~1.2.4]{Pazy} that $\theta \in C^1([0,T],\HH_h)$ and
\[ \dot\theta(t) = Q_h \e^{tL}L\eta - \e^{tL_h}L_h\eta_h.\]
Therefore we can write 
\[ \dot\theta(t) - L_h\theta(t) = (Q_h-P_h) \e^{tL}L\eta.\]
Testing with $\theta(t)$ we get from the dissipativity of $L_h$ that
\[ \begin{aligned}
\norm{\theta(t)}_{h} \dd{}{t} \norm{\theta(t)}_{h}  &=  (\dot\theta(t), \theta(t))_{h} \\
&\leq  ( (Q_h-P_h) \e^{tL}L\eta, \theta(t))_{h} \\
		&\leq  \norm{(Q_h-P_h) \e^{tL}L\eta}_{h} \norm{\theta(t)}_{h}.
\end{aligned} \]
From the uniform equivalence of norms on $\HH_h$ (Assumption~\ref{as:space}.\ref{ass:norm_equiv}) and Eq.~\eqref{eq:Ph_Qh} we get
\begin{equation*} 
\dd{}{t} \norm{\theta(t)}_{h} \leq C \norm{(Q_h-P_h) \e^{tL}L\eta} \leq Ch^s \sum_{i=1}^{q+2}{\norm{L^i\eta}}.
\end{equation*}
Since additionally
\[ \norm{\theta(0)} \leq \norm{(Q_h-I)\eta} + \norm{\eta - \eta_h} \leq Ch^s \sum_{i=1}^{q+1}{\norm{L^i\eta}} + \norm{\eta - \eta_h},\]
we get
\[ \begin{split}
\norm{\theta(t)} &\leq C\norm{\theta(t)}_h \leq C(\norm{\theta(0)}_h + \int_0^t\dd{}{\tau}\norm{\theta(\tau)}_h\ \mathrm{d}\tau) \\
	&\leq C (\norm{\eta - \eta_h} + h^s\sum_{i=1}^{q+2}{\norm{L^i\eta}}), 
\end{split} \]
where the last inequality follows since we integrate over a bounded time interval. We thus arrive at the desired bound.
\end{proof}

\section{Full discretization}
\label{sec:full_discretization}
The full discretizations are defined by applying either the Douglas--Rachford scheme or the Peaceman--Rachford scheme to the ODE~\eqref{eq:discrete_evolution}. That is, to define the numerical flow $S_h$ replace all occurrences of $A$ and $B$ in equations \eqref{eq:DR} and \eqref{eq:PR} by $A_h$ and $B_h$, respectively. Then, the solution of the fully discretized evolution equation \eqref{eq:evolution} is given by $S_h^n\eta_h$. To bound the temporal error $\e^{tL_h}\eta_h - S_h^n\eta_h$ we need the stability bounds of Assumption~\ref{as:time}.
\begin{as} \label{as:time}\
\begin{enumerate}
	\item[\emph{DR.}] For the Douglas--Rachford scheme assume that
		\begin{equation*}
			\norm{A_hL_h^{-1}}_{h} \leq C, \quad \text{ for all } h\in (0,h_{max}].
		\end{equation*}
	\item[\emph{PR.}] For the Peaceman--Rachford scheme assume that
		\begin{equation*}
			\norm{A_hL_h^{-1}}_{h} \leq C \quad \text{and} \quad \norm{A_h^2L_h^{-2}}_{h} \leq C, \quad \text{ for all } h\in (0,h_{max}].
		\end{equation*}
\end{enumerate}
The constant $C$ is assumed to be independent of $h$.
\end{as}
\begin{rem}
Note that Assumption~\ref{as:time}.DR is equivalent to $\norm{B_hL_h^{-1}}_{h} \leq C$ and that Assumption~\ref{as:time}.PR implies that $\norm{A_hB_hL_h^{-2}}_h \leq C$, both uniformly in $h$.
\end{rem}

For the sake of completeness we give a short temporal convergence proof for the Douglas--Rachford splitting scheme. This also serves the purpose of clarifying why Assumption~\ref{as:time}.DR is needed. A slightly longer proof is given in \cite{Hansen2014}.
\begin{lem} \label{lem:DR} 
Let $S_h$ be given by~\eqref{eq:DR} in the manner described in the beginning of this section. If Assumptions \ref{as:space}.\ref{ass:discrete_dissipativity} and \ref{as:time}.DR are valid and $\eta_h \in \HH_h$, then the Douglas--Rachford splitting is first order convergent, that is
\[ \norm{\e^{nkL_h}\eta_h - S_h^n\eta_h}_h \leq C k \norm{L_h^2\eta_h}_h, \]
where $C$ can be chosen uniformly on bounded time intervals and, in particular, independently of $h$, $k$ and $n$.
\end{lem}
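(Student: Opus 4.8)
The plan is to follow the classical telescoping identity for one-step methods: since $S_h$ and $\e^{kL_h}$ commute (both are rational/exponential functions of $A_h$ and $B_h$ acting on the finite-dimensional space $\HH_h$), and since $S_h$ is nonexpansive on $(\HH_h,(\cdot,\cdot)_h)$, we may write
\[
\e^{nkL_h}\eta_h - S_h^n\eta_h = \sum_{j=0}^{n-1} S_h^{\,j}\bigl(\e^{kL_h} - S_h\bigr)\e^{(n-1-j)kL_h}\eta_h,
\]
and bound each summand by $\norm{(\e^{kL_h}-S_h)\e^{(n-1-j)kL_h}\eta_h}_h$. The key point is that the local error operator $\e^{kL_h}-S_h$, applied to a vector $w\in\HH_h$, should be bounded by $Ck^2\norm{L_h^2 w}_h$ (so that $n$ local errors of size $Ck^2\norm{L_h^2\eta_h}_h$ sum to $Ck\norm{L_h^2\eta_h}_h$, using that $\e^{(n-1-j)kL_h}$ is a contraction commuting with $L_h$). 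So the heart of the proof is the local error estimate
\[
\norm{(\e^{kL_h}-S_h)w}_h \leq Ck^2\norm{L_h^2 w}_h, \qquad w\in\HH_h.
\]

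To establish this I would Taylor-expand both $\e^{kL_h}$ and $S_h$ in powers of $k$. Writing $S_h=(I-kB_h)\inv(I-kA_h)\inv(I+k^2A_hB_h)$, I would expand each resolvent as a Neumann-type series $(I-kA_h)\inv = I + kA_h + k^2A_h^2(I-kA_h)\inv$ (and similarly for $B_h$); multiplying out, the $I$ and $O(k)$ terms match $I+kL_h$ exactly, and the $O(k^2)$ term of $S_h$ matches $\tfrac12 L_h^2 = \tfrac12(A_h^2+A_hB_h+B_hA_h+B_h^2)$ — this is precisely the ``advantageous error structure'' alluded to after Eq.~\eqref{eq:DR} and the reason the correction factor $(I+k^2A_hB_h)$ appears. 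The remainder is then a sum of terms each carrying a factor $k^2$ and at most two factors of $A_h$ or $B_h$, together with bounded resolvents $(I-kA_h)\inv$, $(I-kB_h)\inv$ (nonexpansive by dissipativity, Assumption~\ref{as:space}.\ref{ass:discrete_dissipativity}), and an integral remainder for $\e^{kL_h}$ of the form $\int_0^k (k-\tau)\e^{\tau L_h}L_h^2\,\mathrm{d}\tau$, which is bounded by $Ck^2\norm{L_h^2 w}_h$ since $\e^{\tau L_h}$ is a contraction.

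The one genuinely delicate step — and the reason Assumption~\ref{as:time}.DR is invoked — is converting the remainder terms, which naturally appear with two ``loose'' factors of $A_h$ or $B_h$ (e.g.\ $k^2 (I-kB_h)\inv(I-kA_h)\inv A_h^2 w$), into the clean bound $Ck^2\norm{L_h^2 w}_h$. One cannot bound $\norm{A_h^2 w}_h$ or $\norm{A_h B_h w}_h$ by $\norm{L_h^2 w}_h$ directly, but one can insert $L_h^{-2}$ and write $A_h^2 w = (A_h L_h^{-1})(A_h L_h^{-1}) L_h^2 w$ — no, more carefully, $A_h^2 L_h^{-2}$, $A_h B_h L_h^{-2}$, etc., are exactly the operators that Assumption~\ref{as:time}.DR (via the remark, $\norm{A_hL_h^{-1}}_h\le C$ and $\norm{B_hL_h^{-1}}_h\le C$) guarantees to be uniformly bounded in $h$, since $A_h$, $B_h$ commute with neither each other nor $L_h$ in general, so each such product must be reassociated and estimated factor by factor using boundedness of $A_hL_h^{-1}$ and $B_hL_h^{-1}$ and that these commute with $L_h^{-1}$... actually since $A_h,B_h,L_h$ need not commute, the cleanest route is: every remainder term has the form $k^2 R_h\, X_h w$ with $R_h$ a product of nonexpansive resolvents and $X_h\in\{A_h^2, A_hB_h, B_hA_h, B_h^2\}$, and one writes $X_h = X_h L_h^{-2}\cdot L_h^2$ with $\norm{X_h L_h^{-2}}_h\le C$ following from Assumption~\ref{as:time}.DR and the uniform bound on $L_h^{-1}$ in Assumption~\ref{as:space}.\ref{ass:discrete_invetibility}. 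Assembling these bounds over the finitely many remainder terms and summing the telescoping series yields the stated first-order estimate with a constant independent of $h$, $k$, and $n$ on bounded time intervals.
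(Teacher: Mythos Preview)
Your telescoping-plus-local-error strategy is the right shape, but two of the load-bearing claims fail, and the paper's proof is organized precisely to avoid them.

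\textbf{Stability.} You assert that $S_h$ is nonexpansive on $(\HH_h,(\cdot,\cdot)_h)$. This is not established and is in general false: in $S_h=(I-kB_h)\inv(I-kA_h)\inv(I+k^2A_hB_h)$ the two resolvents are contractions, but the trailing factor $I+k^2A_hB_h$ carries an operator whose norm blows up as $h\to 0$. (Your side remark that $S_h$ and $\e^{kL_h}$ commute is also false once $A_h,B_h$ do not commute, though you do not actually use it.) The paper sidesteps this by inserting a resolvent into the telescoping sum,
\[
\e^{nkL_h}\eta_h - S_h^n\eta_h=\sum_{j=1}^n S_h^{\,n-j}\beta_h\,(I-b_h)\bigl(\e^{jkL_h}\eta_h-S_h\e^{(j-1)kL_h}\eta_h\bigr),
\]
and then shows that $S_h^{\,n-j}\beta_h=\beta_h\bigl(\tfrac12\alpha_h(I+a_h)(I+b_h)\beta_h+\tfrac12 I\bigr)^{n-j}$ is nonexpansive, because $(I+a_h)\alpha_h$ and $(I+b_h)\beta_h$ are. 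The extra $\beta_h$ is essential.

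\textbf{Local error.} Your Taylor expansion of $S_h$ (incidentally, the $k^2$ coefficient is $L_h^2$, not $\tfrac12 L_h^2$; DR is genuinely first order) leaves remainder terms of the form $k^2 R_h X_h w$ with $X_h\in\{A_h^2,A_hB_h,B_hA_h,B_h^2\}$. To convert these into $Ck^2\norm{L_h^2 w}_h$ you need $\norm{X_h L_h^{-2}}_h\le C$, but Assumption~\ref{as:time}.DR only gives $\norm{A_hL_h^{-1}}_h\le C$ and $\norm{B_hL_h^{-1}}_h\le C$. Since $A_h$, $B_h$, $L_h$ do not commute, you cannot write, e.g., $A_h^2L_h^{-2}=(A_hL_h^{-1})^2$; a bound on $\norm{A_h^2L_h^{-2}}_h$ is exactly the extra content of the Peaceman--Rachford hypothesis~\ref{as:time}.PR, which you are not allowed here. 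The paper avoids this by manipulating $(I-b_h)(\e^{jkL_h}-S_h\e^{(j-1)kL_h})\eta_h$ algebraically (using $I=\alpha_h-\alpha_ha_h=\alpha_h-a_h\alpha_h$) into two integral terms in which the only ``uncontrolled'' factor is a single $B_hL_h^{-1}$, multiplied by the harmless $a_h\alpha_h=I-\alpha_h$. That is why DR needs only the one-factor bound.
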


\begin{proof}
Define the operators
\[ a_h = kA_h, \quad b_h = kB_h, \quad \alpha_h = (I-a_h)\inv, \quad \beta_h = (I-b_h)\inv, \]
and note that
\begin{equation} \label{eq:identity_expansion}
I = \alpha_h - \alpha_h a_h = \alpha_h - a_h \alpha_h.
\end{equation}
We first expand the error using the telescopic sum
\begin{equation}
\e^{nkL_h}\eta_h - S_h^n\eta_h = \sum_{j=1}^n{S_h^{n-j}\beta_h (I-b_h) (\e^{jkL_h}\eta_h - S_h\e^{(j-1)kL_h}\eta_h)}. 
\label{eq:telescopic_sum}
\end{equation}
The operator $S_h^{n-j}\beta_h$ is nonexpansive on $(\HH_h, (\cdot,\cdot)_{h})$ which follows from the equality
\begin{equation}
S_h^{n-j}\beta_h = \beta_h (\alpha_h (I + a_h b_h)\beta_h)^{n-j} = \beta_h(\frac12\alpha_h(I+a_h)(I+b_h)\beta_h + \frac12 I)^{n-j}
\label{eq:DR_stability}
\end{equation}
and the fact that $(I+a_h)\alpha_h$ and $(I+b_h)\beta_h$ are nonexpansive. The latter holds as
\[ \begin{aligned}
\norm{(I+a_h) v_h}_h^2 &= \norm{v_h}_h^2 + 2(a_hv_h,v_h)_h  + \norm{a_hv_h}_h^2 \\
	&\leq \norm{v_h}_h^2 - 2(a_hv_h,v_h)_h  + \norm{a_hv_h}_h^2 = \norm{(I-a_h) v_h}_h^2.
\end{aligned} \]
By twice expanding the identity, we can rewrite the difference 
\[ \begin{split}
&(I-b_h) (\e^{jkL_h}\eta_h - S_h\e^{(j-1)kL_h}\eta_h)  \\
&\quad= (\alpha_h - \alpha_h a_h - (\alpha_h - a_h\alpha_h)b_h) \e^{jkL_h}\eta_h - \alpha_h(I + a_h b_h)\e^{(j-1)kL_h}\eta_h  \\
&\quad= \alpha_h(\e^{jkL_h}\eta_h - \e^{(j-1)kL_h}\eta_h - (a_h+b_h)\e^{jkL_h}\eta_h) \\
&\quad\quad+ a_h\alpha_h b_h (\e^{jkL_h}\eta_h - \e^{(j-1)kL_h}\eta_h) \\
&\quad= -k\alpha_h\int_{(j-1)k}^{jk}{\frac{\tau-(j-1)k}{k}\e^{\tau L_h}L_h^2\eta_h \ \mathrm{d}\tau} \\
&\quad\quad+ k a_h \alpha_h B_h L_h\inv \int_{(j-1)k}^{jk}{e^{\tau L_h}L_h^2\eta_h \ \mathrm{d}\tau}.
\end{split} \]
The operators $a_h \alpha_h$ and $B_h L_h\inv$ are uniformly bounded. The bound of the former follows from Eq.~\eqref{eq:identity_expansion} whereas the bound of the latter is a direct consequence of Assumption~\ref{as:time}.DR. Applying the $\norm{\cdot}_h$-norm to the error expansion \eqref{eq:telescopic_sum} and adding up the integrals yields the sought after error bound.
\end{proof}

Convergence for the Peaceman--Rachford scheme follows along the same lines, cf.\ \cite{Hansen2013}. We conclude the abstract analysis by proving convergence orders for the full discretization.
\begin{thm}\label{thm:abstract_error}
If Assumptions \ref{as:continuous} and \ref{as:space} are valid and $\eta \in \D(L^{q+r+1})$, then
\[ \norm{e^{nkL}\eta - S_h^nQ_h\eta} \leq C(h^s+k^r) \sum_{i=1}^{q+r+1} \norm{L^i\eta}, \]
under Assumption~\ref{as:time}.DR for the Douglas--Rachford scheme defined by~\eqref{eq:DR} and under Assumption~\ref{as:time}.PR for the Peaceman--Rachford scheme defined by~\eqref{eq:PR}. For the former scheme we have $r = 1$ and for the latter $r = 2$. The operator $Q_h$ is defined by Eq.~\eqref{eq:Q_h} and the constant $C$ can be chosen uniformly on bounded time intervals and, in particular, independently of $h, k$ and $n$.
\end{thm}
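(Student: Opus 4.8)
The plan is to introduce the semi-discrete flow as an intermediate object and split the global error additively,
\[
\e^{nkL}\eta - S_h^n Q_h\eta = \bigl(\e^{nkL}\eta - \e^{nkL_h}Q_h\eta\bigr) + \bigl(\e^{nkL_h}Q_h\eta - S_h^n Q_h\eta\bigr),
\]
and to estimate the two parentheses separately. The first (spatial) part is immediate: I would apply Lemma~\ref{lem:space_error} with $\eta_h = Q_h\eta$, noting that \eqref{eq:I_Qh} gives $\norm{\eta - Q_h\eta} = \norm{(I-Q_h)\eta} \le Ch^s\sum_{i=1}^{q+1}\norm{L^i\eta}$ and that $\eta \in \D(L^{q+r+1}) \subset \D(L^{q+2})$ is enough regularity for the lemma. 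This yields $\norm{\e^{nkL}\eta - \e^{nkL_h}Q_h\eta} \le Ch^s\sum_{i=1}^{q+r+1}\norm{L^i\eta}$, uniformly in $h$, $k$ and $n$.

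The temporal part carries the real difficulty. To obtain the order $k^r$ one wants to invoke Lemma~\ref{lem:DR} (for Douglas--Rachford) resp.\ its Peaceman--Rachford analogue with its sharp form $\norm{\e^{nkL_h}\eta_h - S_h^n\eta_h}_h \le Ck^r\norm{L_h^{r+1}\eta_h}_h$, but taking $\eta_h = Q_h\eta$ is illegitimate: $\norm{L_h^{r+1}Q_h\eta}_h$ is in general \emph{not} bounded uniformly in $h$ (only $\norm{L_h Q_h\eta}_h = \norm{P_h L\eta}_h$ is, via $L_h Q_h = P_h L$). The remedy I would use is to replace $Q_h\eta$ by a smoother surrogate. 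Put
\[
\xi_h = L_h^{-(r+1)}P_h L^{r+1}\eta \in \HH_h, \qquad r_h = Q_h\eta - \xi_h,
\]
which is well defined because $\eta \in \D(L^{q+r+1})$ forces $L^{r+1}\eta \in \D(L^q) \subset \D(P_h)$. Then $L_h^{r+1}\xi_h = P_h L^{r+1}\eta$, so by the uniform norm equivalence (Assumption~\ref{as:space}.\ref{ass:norm_equiv}) and Assumption~\ref{as:space}.\ref{ass:projection} one has $\norm{L_h^{r+1}\xi_h}_h \le C\norm{P_h L^{r+1}\eta} \le C\sum_{i=r+1}^{q+r+1}\norm{L^i\eta}$. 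For the discarded part I would use the identity $L_h^{-1}P_h L = Q_h$ repeatedly to telescope
\[
r_h = \sum_{m=1}^{r}\bigl(L_h^{-m}P_h L^m\eta - L_h^{-m-1}P_h L^{m+1}\eta\bigr) = \sum_{m=1}^{r} L_h^{-m}(P_h - Q_h)(L^m\eta),
\]
and, combining the uniform boundedness of $L_h^{-1}$ (Assumption~\ref{as:space}.\ref{ass:discrete_invetibility}) with the elliptic bound \eqref{eq:Ph_Qh} applied to $v = L^m\eta$ (legitimate since $\eta \in \D(L^{q+r+1}) \subset \D(L^{q+m+1})$ for $m \le r$), conclude $\norm{r_h}_h \le Ch^s\sum_{i=1}^{q+r+1}\norm{L^i\eta}$.

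With this splitting the temporal error is finished termwise. For the regular piece, Lemma~\ref{lem:DR} (under Assumptions~\ref{as:space}.\ref{ass:discrete_dissipativity} and~\ref{as:time}.DR) resp.\ its Peaceman--Rachford analogue (under Assumption~\ref{as:time}.PR), applied to $\eta_h = \xi_h$, gives $\norm{\e^{nkL_h}\xi_h - S_h^n\xi_h}_h \le Ck^r\norm{L_h^{r+1}\xi_h}_h \le Ck^r\sum_{i=r+1}^{q+r+1}\norm{L^i\eta}$. For the remainder piece I would merely use that $\e^{nkL_h}$ is a contraction on $(\HH_h,(\cdot,\cdot)_h)$ — $L_h$ being dissipative, Remark~\ref{rem:L_h_dissipative} — and that $S_h^n$ is nonexpansive there (by \eqref{eq:DR_stability} and its Peaceman--Rachford counterpart), whence $\norm{\e^{nkL_h}r_h - S_h^n r_h}_h \le 2\norm{r_h}_h \le Ch^s\sum_{i=1}^{q+r+1}\norm{L^i\eta}$. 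Adding these two bounds, passing back to $\norm{\cdot}$ via Assumption~\ref{as:space}.\ref{ass:norm_equiv}, and combining with the spatial estimate gives the claimed $C(h^s+k^r)\sum_{i=1}^{q+r+1}\norm{L^i\eta}$. The main obstacle is exactly the point just made: one cannot feed $Q_h\eta$ into the temporal lemma, and the whole argument hinges on finding the right regularised substitute $\xi_h$ together with the verification that what is thrown away, $r_h$, is only of size $h^s$; this verification is a short but slightly delicate telescoping computation that leans on $L_h^{-1}$ being bounded uniformly in $h$ and on \eqref{eq:Ph_Qh}, and it is where the regularity hypothesis $\eta \in \D(L^{q+r+1})$ is consumed.
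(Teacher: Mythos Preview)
Your overall architecture is essentially the paper's: you introduce the same regularised element $\xi_h = L_h^{-(r+1)}P_hL^{r+1}\eta$ (the paper calls it $Z_h\eta$), use it to absorb the powers of $L_h$ coming from the temporal lemma, and telescope the remainder $r_h = Q_h\eta - \xi_h$ into pieces controlled by \eqref{eq:Ph_Qh}. The spatial bound, the temporal bound on the $\xi_h$-piece, and the telescoping identity for $r_h$ are all correct.

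There is, however, a genuine gap in your treatment of the $r_h$-piece. You write $\norm{\e^{nkL_h}r_h - S_h^n r_h}_h \le 2\norm{r_h}_h$, invoking nonexpansiveness of $S_h^n$ on $(\HH_h,(\cdot,\cdot)_h)$ ``by \eqref{eq:DR_stability} and its Peaceman--Rachford counterpart''. But \eqref{eq:DR_stability} does \emph{not} say that $S_h^n$ is nonexpansive; it says that $S_h^{n}\beta_h = S_h^n(I-kB_h)^{-1}$ is. For both schemes $S_h$ is only \emph{similar}, via the unbounded-in-$h$ factor $(I-\kappa B_h)$, to a contraction, and in general $\norm{S_h^n}_h$ is not bounded uniformly in $h$. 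Hence $\norm{S_h^n r_h}_h \le \norm{r_h}_h$ fails, and your bound on the remainder term does not go through as stated.

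The repair is exactly the device the paper uses for its third term: write
\[
\norm{S_h^n r_h}_h = \norm{S_h^n(I-\kappa B_h)^{-1}(I-\kappa B_h) r_h}_h \le \norm{(I-\kappa B_h) r_h}_h,
\]
and then estimate $(I-\kappa B_h)r_h = \sum_{m=1}^r (I-\kappa B_h)L_h^{-m}(P_h-Q_h)L^m\eta$. This requires not only the uniform boundedness of $L_h^{-1}$ but also of $B_hL_h^{-1}$, i.e.\ Assumption~\ref{as:time}; once that is invoked, each summand is bounded by $C\norm{(P_h-Q_h)L^m\eta}$ and \eqref{eq:Ph_Qh} finishes the job. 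With this correction your argument becomes complete and coincides with the paper's proof up to the (harmless) regrouping of terms.
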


\begin{proof}
Define the operator $Z_h = L_h^{-(r+1)}P_hL^{r+1}:\D(L^{q+r+1})\rightarrow\HH_h$ and split the global error into three terms
\begin{equation} \begin{split}
\norm{e^{nkL}\eta - S_h^nQ_h\eta} &\leq \norm{\e^{nkL}\eta - \e^{nkL_h}Z_h\eta} + \norm{(\e^{nkL_h} - S_h^n)Z_h\eta} \\
	&+ \norm{S_h^n(Z_h\eta - Q_h\eta)}. 
\label{eq:error_separation}
\end{split} \end{equation}
The first term, the spatial error, can be bounded by Lemma~\ref{lem:space_error} as
\begin{equation}
\norm{\e^{nkL}\eta - \e^{nkL_h}Z_h\eta} \leq C(\norm{Z_h\eta-\eta} +  h^s \sum_{i=1}^{q+2}{\norm{L^i\eta}}),
\label{eq:term_1}
\end{equation}
where according to Eq.~\eqref{eq:I_Qh}
\begin{equation}
\norm{Z_h\eta-\eta} \leq \norm{Z_h\eta-Q_h\eta} + \norm{Q_h\eta - \eta} \leq C(\norm{Z_h\eta-Q_h\eta}_h + h^s\sum_{i=1}^{q+1}{\norm{L^i\eta}}).
\label{eq:term_1_2}
\end{equation}
For the second term of Eq.~\eqref{eq:error_separation}, the temporal error, we use the uniform equivalence of norms, Assumption~\ref{as:space}.\ref{ass:norm_equiv}, to perform the analysis on $(\HH_h, (\cdot,\cdot)_h)$. Under Assumption~\ref{as:space}.\ref{ass:discrete_dissipativity} and respective version of Assumption~\ref{as:time} we get from Lemma~\ref{lem:DR} respectively \cite[Theorem~2]{Hansen2013} that
\begin{equation}
\norm{(\e^{nkL_h} - S_h^n)Z_h\eta} \leq C\norm{(\e^{nkL_h} - S_h^n)Z_h\eta}_h \leq Ck^r \norm{L_h^{r+1} Z_h\eta}_h = Ck^r\norm{P_hL^{r+1}\eta}_h.
\label{eq:term_2_0}
\end{equation}
Further, from Assumptions \ref{as:space}.\ref{ass:norm_equiv} and \ref{as:space}.\ref{ass:projection} we get
\begin{equation} \label{eq:term_2} 
Ck^r\norm{P_hL^{r+1}\eta}_h \leq Ck^r \norm{P_h L^{r+1}\eta} \leq Ck^r \sum_{i=0}^{q}{\norm{L^{i+r+1}\eta}}.
\end{equation}
Considering the third term of Eq.~\eqref{eq:error_separation} we note that $S_h^n(I-\kappa B_h)\inv$ is nonexpansive on $(\HH_h,(\cdot,\cdot)_h)$. This follows by Eq.~\eqref{eq:DR_stability} for the Douglas--Rachford splitting with $\kappa = k$ and from \cite[Lemma~1]{Hansen2013} for the Peaceman--Rachford splitting with $\kappa = k/2$. Combining with the uniform equivalence of norms we get
\begin{equation} \begin{split}
\norm{S_h^n(Z_h\eta - Q_h\eta)} &\leq C\norm{S_h^n(Z_h\eta - Q_h\eta)}_h \\
	&= C\norm{S_h^n(I-\kappa B_h)\inv (I-\kappa B_h) (Z_h\eta - Q_h\eta)}_h \\
	&\leq C\norm{(I-\kappa B_h)(Z_h\eta - Q_h\eta)}_h.
\label{eq:term_3}
\end{split} \end{equation}
Then, the uniform bounds of $L_h\inv$ and $B_hL_h\inv$ together with the uniform equivalence of norms and the bound \eqref{eq:Ph_Qh} give
\begin{align} 
&\norm{(I-\kappa B_h)(Z_h\eta - Q_h\eta)}_h \leq \sum_{i=1}^{r}{\norm{(I-\kappa B_h)(L_h^{-(i+1)} P_h L^{i+1} - L_h^{-i} P_h L^{i})\eta}_h} \nonumber \\
	&\quad\leq C\sum_{i=1}^{r}{(\norm{L_h^{-i}}_h + \kappa \norm{B_hL_h\inv}_h\norm{L_h^{-i+1}}_h)\norm{(Q_h - P_h) L^{i}\eta}_h} \label{eq:term_4} \\
	&\quad\leq Ch^s \sum_{i=1}^{q+r+1}{\norm{L^i\eta}}. \nonumber
\end{align}
The term $\norm{Z_h\eta-Q_h\eta}_h$ of Eq.~\eqref{eq:term_1_2} can be bounded in the same manner. The theorem then follows by combining equations \eqref{eq:error_separation} -- \eqref{eq:term_4}.
\end{proof}

\begin{rem} \label{rem:proof_mod_without_invert}
If we remove the assumptions that $L$ and $L_h$ are invertible Lemma~\ref{lem:space_error}, Theorem~\ref{thm:abstract_error}, and their proofs need slight modifications to still hold. Additionally, modifications of Assumptions \ref{as:space}.\ref{ass:elliptic_error} and \ref{as:time} are needed. To this end replace all occurrences of $L\inv$ and $L_h\inv$ in these assumptions by $(I-L)\inv$ and $(I-L_h)\inv$ respectively. No assumption of $(I-L_h)\inv$ being uniformly bounded is needed since the resolvent is nonexpansive due to Assumption~\ref{as:space}.\ref{ass:discrete_dissipativity} and Remark~\ref{rem:L_h_dissipative}. Similarly, replace the operators $Q_h$ and $Z_h$ with $(I-L_h)\inv P_h (I-L_h)$ and $(I-L_h)^{-r-1} P_h (I-L_h)^{r+1}$, respectively. For Lemma~\ref{lem:space_error} consider the shifted evolution equation
\[ \dot w = (L-I)w \]
with solution $w(t) = \e^{-t} u(t)$. Since $L$ is maximal dissipative the operator $L-I$ is also maximal dissipative and additionally invertible. Using the modified assumptions the lemma follows for the flow of this shifted operator and thus also for the original operator through the simple relation between $u$ and $w$.
\end{rem}

\section{Dimension splitting and quadrature finite elements}
\label{sec:dimension_splitting}
We apply our convergence results to dimension splitting of the 2D diffusion problem defined by
\[ Lu = Au + Bu = \dedex (\lambda(x)\mu(y) \dedex u) + \dedey (\lambda(x)\mu(y) \dedey u), \]
with homogenous Dirichlet boundary conditions. For the spatial discretization we use a quadrature finite element method. This results in a discretization -- similar to finite difference discretizations -- where the matrices related to $A_h$ and $B_h$ decouple into block diagonal matrices with blocks corresponding to 1D problems. Therefore, the flow $S_h$ can be efficiently computed. 

Let $(\HH, (\cdot,\cdot)) = (L^2(\Omega), (\cdot,\cdot)_{L^2(\Omega)})$ where $\Omega = (0,1)^2$, additionally assume that $\lambda, \mu \in \C^2([0,1])$ and that $\lambda(x) \geq \lambda_0 > 0$, $\mu(y) \geq \mu_0 > 0$ for all $x,y \in (0,1)$. We define on $H^1_0(\Omega)$ the bounded and coercive bilinear form $b_L$ related to $L$ by
\begin{equation} \label{eq:bilinear_form}
(-Lv,\varphi) = b_L(v,\varphi) = (\lambda\mu \dedex v, \dedex \varphi) + (\lambda\mu \dedey v, \dedey \varphi),
\end{equation}
cf.\ \cite[Section~3.5]{Larsson}. In this context we can interpret $L$ as an unbounded, invertible and maximal dissipative operator on $L^2(\Omega)$ with domain
\begin{equation}
\D(L) = \{v \in H^1_0(\Omega);\ Lv \in L^2(\Omega)\} = H^2(\Omega) \cap H^1_0(\Omega),
\label{eq:elliptic_regularity_L2}
\end{equation}
for details see \cite[Sections~1--2]{Crouzeix} and \cite[Theorem~9.1.22]{Hackbusch}.

Consider the elliptic problem: Given an $f\in L^2(\Omega)$ find a $v \in H^1_0(\Omega)$ such that
\begin{equation}
b_L(v,\varphi) = (f,\varphi), \quad \text{ for all } \varphi \in H^1_0(\Omega).
\label{eq:weak_problem}
\end{equation} 
We note that in the above notation this is equivalent to solving $-Lv = f$. For Assumption~\ref{as:space}.\ref{ass:elliptic_error} we will later need the following regularity results and a priori estimates. Let $p \in [2, \infty)$ be fixed, then if $f \in L^p(\Omega)$ we have $v \in W^{2,p}(\Omega)$ and
\begin{equation}
\norm{v}_{W^{2,p}(\Omega)} \leq C (\norm{f}_{L^p(\Omega)} + \norm{v}_{W^{1,p}(\Omega)}). \label{eq:a_priori_Lp}
\end{equation}
Here $W^{2,p}(\Omega)$ denotes the space of functions whose weak derivatives up to order two are in $L^p(\Omega)$. The case $p = 2$ is considered in \cite[Theorem~9.1.22]{Hackbusch} and in \cite{Kadlec}. This result is used to characterize $\D(L)$ in Eq.~\eqref{eq:elliptic_regularity_L2}. Additionally, the term $\norm{v}_{H^1(\Omega)}$ can be bounded by $C \norm{f}_{L^2(\Omega)}$. For $p>2$ the a priori estimate~\eqref{eq:a_priori_Lp} follows from \cite[Theorem~4.3.2.4]{Grisvard} and the relation
\[ \norm{\Delta v}_{L^{p}(\Omega)} \leq C(\norm{f}_{L^{p}(\Omega)} + \norm{v}_{W^{1,p}(\Omega)}), \]
which holds for $v = -L\inv f$. The regularity result $v\in W^{2,p}(\Omega)$ is given by a slight modification of \cite[Theorem~4.4.3.7]{Grisvard}.

For the spatial discretization we construct continuous and quadrilateral finite element spaces. For a given $h \in (0,h_{\max}]$ such that $hM_h = 1$ and $M_h$ integer define the uniform square mesh $\{(x_i,y_j) = (ih,jh)\}_{i,j = 0}^{M_h}$. Let $K_{i,j} \subset \bar\Omega$ denote the square element defined as the convex hull of the mesh points $(x_i,y_j), (x_{i+1},y_j), (x_i,y_{j+1})$, and $(x_{i+1},y_{j+1})$. Denote by $\phi_{i,j}$ the continuous function which in each element is linear in $x$ and in $y$, takes the value 1 at $(x,y) = (x_i,y_j)$ and vanishes at all other mesh points. We then define $\HH_h$ as the linear span of $\{\phi_{i,j}\}_{i,j=1}^{M_h-1}$ and thus $\HH_h \subset H^1_0(\Omega)$. 

Using the trapezoidal rule on each element to approximate the $L^2(\Omega)$ inner product gives
\begin{equation}
(v,\varphi)_{h} = \frac{h^2}4 \sum_{i,j=0}^{M_h-1}{\sum_{I,J = 0}^1{(v\varphi)(x_{i+I},y_{j+J})}}.
\label{eq:inner_product_quadrature}
\end{equation}
for $v$ and $\varphi$ everywhere defined. See details in \cite[Sections~2.2~and~4.1]{Ciarlet}. By considering each element separately it is easy to verify that $(\cdot,\cdot)_{h}$ is an inner product on $\HH_h$ and that the induced norm is uniformly equivalent with the $L^2(\Omega)$-norm on $\HH_h$. Further, let $P_h$ be the the orthogonal projection with respect to $(\cdot,\cdot)_h$, defined on $\D(P_h) = \C(\bar\Omega)$. One easily realizes that $P_h$ coincides with the piecewise linear interpolation operator of \cite[Theorem~3.2.1]{Ciarlet}. Thus, since additionally $H^2(\Omega) \hookrightarrow \C(\bar\Omega)$, Assumption~\ref{as:space}.\ref{ass:projection} follows with $s = 2$ and $q = 1$ from this theorem and the a priori estimate~\eqref{eq:a_priori_Lp}. We note that for standard finite element schemes $P_h$ would be the normal projection from $L^2(\Omega)$ to $\HH_h$ and we would have $q=0$.

The discrete operator $L_h: \HH_h \rightarrow \HH_h$ and corresponding bilinear form $b_{L_h}$ on $\HH_h$ are defined by replacing $(\cdot,\cdot)$ with $(\cdot,\cdot)_h$ in Eq.~\eqref{eq:bilinear_form}, i.e.
\[ (-L_h v_h,\varphi_h)_h = b_{L_h}(v_h,\varphi_h) = (\lambda\mu \dedex v_h, \dedex \varphi_h)_h + (\lambda\mu \dedey v_h, \dedey \varphi_h)_h. \]
Similarly $A_h: \HH_h \rightarrow \HH_h$ and $B_h: \HH_h \rightarrow \HH_h$ are defined through the bilinear forms $b_{A_h}$ and $b_{B_h}$ given as the first respectively the second term in the right hand side of the above equation.
Note that extra care has to be given to element borders where the weak derivatives are not necessarily continuous. 

With the same analysis as for $L$ we can interpret $A_h, B_h$ and $L_h$ as maximal dissipative operators on $(\HH_h, (\cdot,\cdot)_h)$. The invertibility of $L_h$ and the uniform bound of this inverse follows as a direct consequence from the uniform ellipticity of $b_{L_h}$ (see \cite[Exercise~4.1.7]{Ciarlet} or \cite[Theorem~3]{Ciarlet1972}) and the uniform equivalence of $\norm{\cdot}$ and $\norm{\cdot}_h$. 

The discrete approximation of the elliptic problem~\eqref{eq:weak_problem} consists of finding a $v_h \in \HH_h$ such that
\begin{equation}
b_{L_h}(v_h,\varphi_h) = (f,\varphi_h)_h, \quad \text{for all } \varphi_h \in \HH_h,
\label{eq:discrete_problem}
\end{equation}
where $f\in L^2(\Omega)$ is assumed to be everywhere defined. Noting that $v = -L\inv f$ and $v_h = -L_h\inv P_h f$ one realizes that asserting Assumption~\ref{as:space}.\ref{ass:elliptic_error} is in this application equivalent to proving convergence of the discrete approximation~\eqref{eq:discrete_problem}. In \cite{Ciarlet1972} such results are given under the additional complication of curved boundaries. More precisely, \cite[Theorem~11]{Ciarlet1972} gives for $v\in W^{4,3}(\Omega) \cap H^1_0(\Omega)$ that
\[ \norm{v-v_h} \leq Ch^2 \norm{v}_{W^{4,3}(\Omega)}. \]
However in the current setting with straight boundaries the bound can be improved. Let $f\in H^2(\Omega) \hookrightarrow W^{1,3}(\Omega) \hookrightarrow L^{3}(\Omega)$, then $v \in W^{2,3}(\Omega)$ by the regularity of the elliptic equation \eqref{eq:weak_problem}. Following the proofs of \cite[Theorems~9~and~11]{Ciarlet1972} with some care we get
\[ \norm{v-v_h} \leq Ch^2 ( \norm{v}_{W^{2,3}(\Omega)} + \norm{f}_{H^{2}(\Omega)} ). \]
Using the a priori estimate~\eqref{eq:a_priori_Lp} first for $p=3$, then twice for $p=2$, we arrive at the assertion of Assumption~\ref{as:space}.\ref{ass:elliptic_error}:
\[ \begin{aligned}
\norm{L\inv f - L_h\inv P_h f} &\leq Ch^2 ( \norm{v}_{W^{2,3}(\Omega)} + \norm{f}_{H^{2}(\Omega)} ) \\
	&\leq Ch^2 ( \norm{f}_{L^3(\Omega)} + \norm{v}_{W^{1,3}(\Omega)} + \norm{f}_{H^{2}(\Omega)} ) \\
	&\leq Ch^2 ( \norm{f}_{H^2(\Omega)} + \norm{v}_{H^2(\Omega)} + \norm{f}_{H^{2}(\Omega)} ) \\
	&\leq Ch^2 ( \norm{f} + \norm{Lf}), \quad \text{ for all } f \in \D(L).
\end{aligned} \]

Finally we show the uniform bound in Assumption~\ref{as:time}.DR. To this end consider the symmetric and positive definite mass matrix $M$ and stiffness matrices $K_A$ and $K_B$ corresponding to the parabolic problems defined by $b_{A_h}$ and $b_{B_h}$. See~\cite[Section~10.1]{Larsson} for definitions. Due to the separable coefficient function $\lambda\mu$ the quadrature formula~\eqref{eq:inner_product_quadrature} gives stiffness matrices that can be written as Kronecker products
\[ \begin{split}
&K_A = K_{\lambda} \otimes D_{\mu}, \quad K_B = D_{\lambda} \otimes K_{\mu}, \quad \text{with} \\
&\begin{split} K_{\lambda} =\ &\text{tridiag}\big({-\lambda(x_{i-1})} - \lambda(x_i),\\
 &\lambda(x_{i-1}) + 2\lambda(x_i) + \lambda(x_{i+1}),\ -\lambda(x_i) - \lambda(x_{i+1}) \big)/2, \end{split} \\
&D_{\mu} = \text{diag}\big(\mu(y_j)\big), \quad i,j = 1, \dots, M_h - 1,
\end{split} \] 
and similarily for $K_{\mu}$ and $D_{\lambda}$. Additionally, $M = h^2I$ where $I$ is the identity matrix in $\R^{(M_h-1)^2}$. Let
\[ D = D_{\lambda} \otimes D_{\mu}, \]
then we have
\[ \begin{aligned}
&K_A(K_A+K_B)\inv = ((K_A+K_B)K_A\inv)\inv\\
&= \left(I + D_{\lambda}K_{\lambda}\inv \otimes K_{\mu}D_{\mu}\inv\right)\inv \\
&= \left(D^{1/2} \left( I + D_{\lambda}^{1/2}K_{\lambda}\inv D_{\lambda}^{1/2} \otimes D_{\mu}^{-1/2}K_{\mu}D_{\mu}^{-1/2} \right) D^{-1/2} \right)\inv \\
&= D^{1/2} \left( I + \left(D_{\lambda}^{-1/2}K_{\lambda}D_{\lambda}^{-1/2}\right)\inv \otimes D_{\mu}^{-1/2}K_{\mu}D_{\mu}^{-1/2} \right)\inv D^{-1/2},
\end{aligned} \]
where the Kronecker product in the middle factor of the last expression defines a symmetric and positive definite matrix. Due to the simple structure of the mass matrix we have
\[ \begin{aligned}
&\norm{A_hL_h\inv}_h = \norm{(-M\inv K_A)(-(K_A+K_B)\inv M)}_2\\
&= \norm{K_A(K_A+K_B)\inv}_2  = \norm{(I + D_{\lambda}K_{\lambda}\inv \otimes K_{\mu}D_{\mu}\inv)\inv}_2 \\
&\leq \frac{\sqrt{\norm{\lambda}_{L^\infty(0,1)}\norm{\mu}_{L^\infty(0,1)}}}{\sqrt{\lambda_0\mu_0}} \norm{ (I + (D_{\lambda}^{-1/2}K_{\lambda}D_{\lambda}^{-1/2})\inv \otimes D_{\mu}^{-1/2}K_{\mu}D_{\mu}^{-1/2} )\inv}_2 \\
&\leq \frac{\sqrt{\norm{\lambda}_{L^\infty(0,1)}\norm{\mu}_{L^\infty(0,1)}}}{\sqrt{\lambda_0\mu_0}} \leq C, \quad \text{ for all } h \in (0, h_{\text{max}}],
\end{aligned} \]
where $\norm{\cdot}_2$ denotes the Euclidean norm in $\R^{(M_h-1)^2}$. 

With all the relevant assumptions asserted we arrive at the following corollary of Theorem~\ref{thm:abstract_error} providing optimal convergence orders for the Douglas--Rachford dimension splitting combined with quadrature finite elements:
\begin{cor} \label{cor:DR}
Let $\HH$ and $L$ be defined as above and let $\HH_h$, $L_h$, $A_h$ and $B_h$ be given by the quadrature finite element method also defined above. Let the temporal discretization $S_h$ be given by the Douglas--Rachford scheme. Then, if $\eta \in \D(L^3)$,
\[ \norm{\e^{nkL}\eta - S_h^nQ_h\eta} \leq C (h^2 + k) \sum_{i = 1}^{3}{\norm{L^i\eta}}, \]
where $C$ can be chosen uniformly on bounded time intervals and, in particular, independently of $h, k$ and $n$.
\end{cor}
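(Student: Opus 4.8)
The plan is to obtain the corollary as a direct specialization of Theorem~\ref{thm:abstract_error} to the Douglas--Rachford scheme, i.e.\ with $r = 1$, and with the concrete parameters $s = 2$ and $q = 1$ dictated by the quadrature finite element discretization. Accordingly, the substance of the proof is not a new estimate but the verification that the present $\HH = L^2(\Omega)$, $L$, $\HH_h$, $(\cdot,\cdot)_h$, $A_h$ and $B_h$ satisfy every hypothesis of that theorem; once this is done, inserting $q + r + 1 = 3$ into the conclusion of Theorem~\ref{thm:abstract_error} yields exactly $\norm{\e^{nkL}\eta - S_h^nQ_h\eta} \leq C(h^2 + k)\sum_{i=1}^{3}\norm{L^i\eta}$ for $\eta \in \D(L^3)$, which is the claimed bound.

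First I would check Assumption~\ref{as:continuous}: using the bounded and coercive bilinear form $b_L$ of Eq.~\eqref{eq:bilinear_form} together with the Lax--Milgram theorem and the elliptic regularity characterization $\D(L) = H^2(\Omega)\cap H^1_0(\Omega)$ from Eq.~\eqref{eq:elliptic_regularity_L2} (cf.\ \cite{Crouzeix,Hackbusch}), one identifies $L$ as an invertible, maximal dissipative operator on $L^2(\Omega)$. Next I would establish Assumption~\ref{as:space} with $s = 2$, $q = 1$: Assumptions~\ref{as:space}.\ref{ass:norm_equiv} and~\ref{as:space}.\ref{ass:discrete_dissipativity} follow by an element-by-element inspection of the trapezoidal quadrature inner product~\eqref{eq:inner_product_quadrature} and of the forms $b_{A_h}$, $b_{B_h}$; Assumption~\ref{as:space}.\ref{ass:discrete_invetibility} follows from the uniform ellipticity of $b_{L_h}$ combined with the uniform norm equivalence; Assumption~\ref{as:space}.\ref{ass:projection} follows by recognizing $P_h$ as the interpolation operator of \cite[Theorem~3.2.1]{Ciarlet}, invoking the standard interpolation estimate together with the embedding $H^2(\Omega)\hookrightarrow\C(\bar\Omega)$ and the a priori bound~\eqref{eq:a_priori_Lp}; and Assumption~\ref{as:space}.\ref{ass:elliptic_error} — the estimate $\norm{L\inv f - L_h\inv P_h f} \leq Ch^2(\norm{f} + \norm{Lf})$ — follows by adapting the convergence analysis of quadrature finite element methods for the discrete elliptic problem~\eqref{eq:discrete_problem} in the style of \cite{Ciarlet1972} and chaining in the $L^p$ a priori estimates~\eqref{eq:a_priori_Lp} for $p = 3$ and then $p = 2$.

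It then remains to verify Assumption~\ref{as:time}.DR, namely $\norm{A_hL_h\inv}_h \leq C$ uniformly in $h$. Here I would exploit the separable structure of the diffusion coefficient $\lambda(x)\mu(y)$: with the trapezoidal quadrature the mass matrix is simply $h^2 I$ and the stiffness matrices factor as Kronecker products $K_A = K_\lambda \otimes D_\mu$ and $K_B = D_\lambda \otimes K_\mu$. Symmetrizing by the diagonal matrix $D = D_\lambda \otimes D_\mu$ turns $K_A(K_A+K_B)\inv$ into $D^{1/2}(I + G)\inv D^{-1/2}$ with $G$ symmetric positive definite, so that the Euclidean operator norm is bounded by $\sqrt{\norm{\lambda}_{L^\infty(0,1)}\norm{\mu}_{L^\infty(0,1)}/(\lambda_0\mu_0)}$, which is independent of $h$; passing from the Euclidean norm back to $\norm{\cdot}_h$ via the mass matrix gives the claim. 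Finally, applying Theorem~\ref{thm:abstract_error} with $r = 1$ under the now-verified Assumptions~\ref{as:continuous}, \ref{as:space} and~\ref{as:time}.DR concludes the proof.

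The genuinely delicate step is Assumption~\ref{as:space}.\ref{ass:elliptic_error}: the available quadrature finite element convergence results are phrased for curved domains and in norms that must be carefully converted, using elliptic $W^{2,p}$-regularity with $p > 2$, into the clean bound involving only the graph norm of $L$; this is where the a priori estimates~\eqref{eq:a_priori_Lp} and a reworking of the proofs in \cite{Ciarlet1972} carry the weight. By comparison, the norm equivalence, the discrete dissipativity and invertibility, the interpolation estimate, and the stability bound of Assumption~\ref{as:time}.DR are routine given the explicit tensor-product structure of the scheme.
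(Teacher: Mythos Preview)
Your proposal is correct and follows essentially the same route as the paper: Section~\ref{sec:dimension_splitting} is devoted precisely to verifying Assumptions~\ref{as:continuous}, \ref{as:space} (with $s=2$, $q=1$) and~\ref{as:time}.DR for the quadrature finite element discretization, after which the corollary is stated as an immediate consequence of Theorem~\ref{thm:abstract_error} with $r=1$. Your identification of the key ingredients---the elliptic regularity and Lax--Milgram argument for Assumption~\ref{as:continuous}, the interpolation estimate for Assumption~\ref{as:space}.\ref{ass:projection}, the reworking of the \cite{Ciarlet1972} analysis together with the $W^{2,p}$ a~priori bounds for Assumption~\ref{as:space}.\ref{ass:elliptic_error}, and the Kronecker-product symmetrization for Assumption~\ref{as:time}.DR---matches the paper's argument point by point, including your singling out of Assumption~\ref{as:space}.\ref{ass:elliptic_error} as the delicate step.
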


\section{Numerical experiments}
\label{sec:numerical_experiments}
With the help of the diffusion problem and spatial discretization discussed in Section~\ref{sec:dimension_splitting} we illustrate the convergence orders predicted by Theorem~\ref{thm:abstract_error} (and Corollary~\ref{cor:DR}). For our specific example we choose 
\[ \lambda(x) = x\sin(\pi x) + 0.1 \quad \text{and} \quad \mu(y) = \cos(2\pi y) + 1.1. \]
To assure that $\eta \in \D(L^4)$ let the initial value be given by
\[ \eta = \frac{L^{-4}\eta_0}{\norm{L^{-4}\eta_0}_{L^\infty(\Omega)}}, \]
where $\eta_0(x,y) = \sin(3\pi x)\cos(2\pi y)$.

To demonstrate the simultaneous convergence orders we find values of $k$ and $h$ such that the spatial and temporal errors are of approximately the same size. These parameters are then decreased keeping $h$ proportional to $\sqrt{k}$ for the Douglas--Rachford experiments and proportional to $k$ for the Peaceman--Rachford experiments. A reference solution is constructed by using a fine grid for the quadrature finite element method, $h = 2^{-10}$, and using the trapezoidal rule with step size $k = 2^{-13}$ as temporal discretization. The global error approximations are computed at time $t = 0.5$ in the $\norm{\cdot}_h$-norm, where $h = 2^{-10}$. The observed orders are presented in Figure~\ref{fig:convergence_DR_PR} and the results are in agreement with Theorem~\ref{thm:abstract_error}.
\begin{figure}[t] 
\centering \footnotesize
	\includegraphics[width=0.44\textwidth]{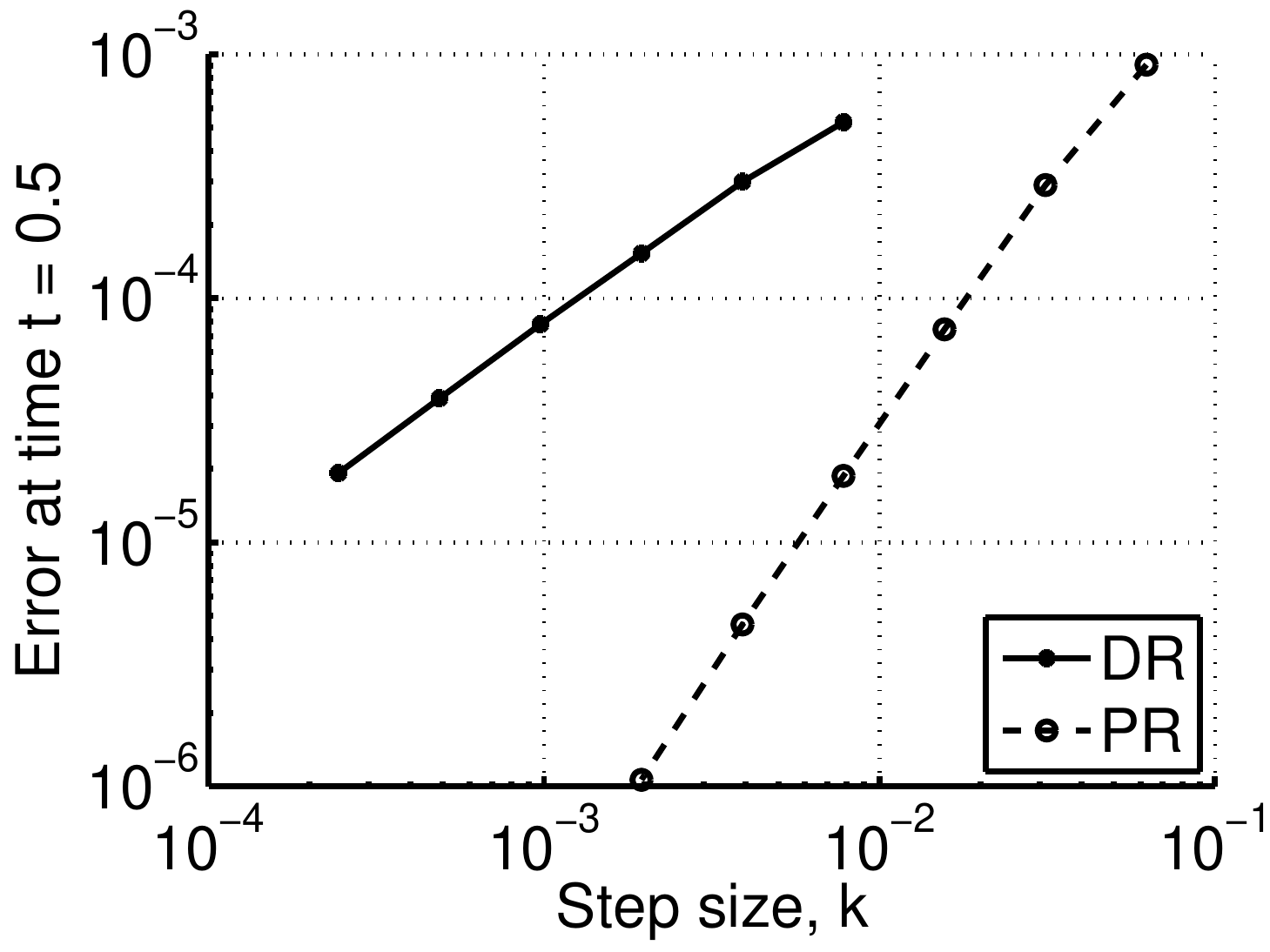}
	  \qquad
  \begin{tabular}[b]{l|ll|ll}
		 & \multicolumn{2}{|l|}{DR} & \multicolumn{2}{|l}{PR} \\
		\cline{2-3} \cline{4-5}
		$k$		& $h$		& Error	&	$h$		& Error \\
		\hline \hline
		1/16 &  &  & 1/16 & 9.1E-4 \\
		1/32 &  &  & 1/32 & 2.9E-4 \\
		1/64 &  &  & 1/64 & 7.5E-5 \\
		1/128 & 1/16 & 5.3E-4 & 1/128 & 1.9E-5 \\
		1/256 & 1/23 & 3.0E-4 & 1/256 & 4.6E-6 \\
		1/512 & 1/32 & 1.5E-4 & 1/512 & 1.1E-6 \\
		1/1024 & 1/45 & 7.8E-5 &  &  \\
		1/2048 & 1/64 & 3.9E-5 &  &  \\
		1/4096 & 1/91 & 1.9E-5 &  &  \vspace{1mm}
  \end{tabular}
  \caption{With $h$ proportional to $\sqrt{k}$ for the Douglas--Rachford (DR) experiments we observe first order convergence in $k$. Similarly, with $h$ proportional to $k$ for the Peaceman--Rachford (PR) experiments we observe second order convergence. The orders are in agreement with Theorem~\ref{thm:abstract_error}. The parameters values used in the experiments can be seen in the table together with the approximated global discretization errors.}
\label{fig:convergence_DR_PR}
\end{figure}

\section*{Acknowledgments}
The authors were supported by the Swedish Research Council under grant 621-2011-5588.

\bibliographystyle{abbrv}
\bibliography{references}

%

\end{document}